\documentclass{article}
\usepackage{alltt}
\usepackage{amsmath}
\usepackage{amssymb}
\usepackage{amsthm}
\usepackage{bm}
\usepackage{breqn}
\usepackage{dsfont}
\usepackage{enumerate}
\usepackage{fancyref}
\usepackage{geometry}
\usepackage{graphics}
\usepackage{graphicx}
\usepackage[mathlines]{lineno}
\usepackage{mathtools}
\usepackage{standalone}
\usepackage{tikz}
\usetikzlibrary{calc}
\usepackage{pgffor}

\usepackage{hyperref}

\newtheorem{theorem}{Theorem}
\newtheorem{definition}[theorem]{Definition}

\newcommand{\card}[1]{\left\lvert#1\right\rvert}
\newcommand{\set}[1]{\left\{#1\right\}}
\newcommand{\ceil}[1]{\left\lceil#1\right\rceil}
\newcommand{\floor}[1]{\left\lfloor#1\right\rfloor}
\newcommand{\pare}[1]{\left(#1\right)}

\mathtoolsset{showonlyrefs}

\title{On B-Colorings in Planar Graphs}
\author{Anthony Vuolo\footnote{Department of Mathematics, Iowa State University, Ames, IA 50014, USA. Email: \href{mailto:vuolo@iastate.edu}{vuolo@iastate.edu}}}
\date{\today}

\begin{document}
\maketitle
\begin{abstract}
Gy\'arf\'as and S\'ark\"ozy [Studia Sci. Math. Hungar., 2023] defined a B-coloring of a graph to be a proper coloring of the edge set in which any $C_4$ is totally multicolored. 
Let $q_B(G)$ denote the minimum number of colors sufficient for a B-coloring of a graph $G$. 
In this paper, we prove that any planar graph $G$ with $\Delta=\Delta(G)$ and $\Delta_2=\Delta_2(G)$ has $q_B(G)\leq\Delta+\max\{\Delta_2,38\}$, refining a bound by Kong, Wang, and Zheng [J. Graph Theory, 2026].
\end{abstract}

\section{Background}
A coloring of the edge set of a graph is said to be proper if no two incident edges have the same color. 
This can be rephrased as every color class is a matching but can also be rephrased as every copy of $K_{1,2}$ is totally multicolored.
The edge-chromatic number (or chromatic index) of a graph $G$, denoted $\chi'(G)$, is the minimum number of colors sufficient to properly color the edge set of $G$.

In 1964, Vizing~\cite{Vizing_1964} proved that $\Delta(G)\leq\chi'(G)\leq\Delta(G)+1$ for any simple graph $G$ and all graphs in this paper are simple. 
Graphs $G$ for which $\chi'(G)=\Delta(G)$ are said to be in \emph{Class 1} and graphs $G$ for which $\chi'(G)=\Delta(G)+1$ are said to be in \emph{Class 2}.
Moreover, in 1965, Vizing~\cite{Vizing_1965} also showed every planar graph $G$ with $\Delta(G)\geq 8$ is in Class 1. 
In that same paper, Vizing conjectured that no Class 2 planar graphs exist with $\Delta(G)\in\{6,7\}$. Furthermore, he gave examples of Class 2 planar graphs with maximum degree $\Delta(G)=k$ for $k\in\{2,3,4,5\}$: 
\begin{itemize}
    \item $k=2$: Any odd cycle.
    \item $k=3$: The tetrahedral graph with one edge subdivided.
    \item $k=4$: The octahedral graph with one edge subdivided.
    \item $k=5$: The icosahedral graph with one edge subdivided.
\end{itemize}

In 2001, Sanders and Zhao~\cite{SZ_2001} verified Vizing's conjecture in the case where $\Delta(G)=7$, showing that no Class 2 graphs exist with $\Delta(G)\geq 7$. 
It is still unknown whether there is a Class 2 planar graph with $\Delta(G)=6$.

The literature is replete with edge colorings that have more restrictions than simply being proper.
A \emph{strong edge coloring} of a graph is defined to be a proper edge coloring in which each color class induces a matching. 
This can be rephrased as every copy of $K_{1,2}=P_3$ and every copy of $P_4$ is totally multicolored, where $P_k$ denotes the path on $k$ vertices. 
The \emph{strong edge coloring number} of a graph $G$ is the minimum number of colors sufficient for a strong edge coloring and is denoted $\chi_s'(G)$.
Brown, Erd\H{o}s, and S\'os \cite{BES_1973} conjectured that for any graph $G$, it holds that
\begin{equation}
    \chi_s'(G)=\left\{\begin{array}{ll}
        \frac{5}{4}(\Delta(G))^2, &\text{ if }\Delta(G)\text{ is even;}\\
        \frac{1}{4}\pare{5(\Delta(G))^2-2\Delta(G)+1}, &\text{ if }\Delta(G)\text{ is odd.}
    \end{array}\right.
\end{equation}
Molloy and Reed \cite{MR_1997} proved that $\chi_s'(G)\leq 1.998(\Delta(G))^2$ for any graph $G$. 

Faudree, Schelp, Gy\'arf\'as, and Tuza~\cite{FSGT_1990} proved that $\chi_s'(G)\leq 4\Delta+4$ for any planar graph $G$ with maximum degree $\Delta$. 
Furthermore, for all $\Delta\geq 2$, they give a construction of a planar graph that meets this upper bound.
Other results related to strong edge colorings are given in~\cite{BI_strong_2013,MS_2017,Deniz_2024,Wang_2025}.

In this paper, we consider the notion of a \emph{B-coloring}, which was defined by Gy\'arf\'as and S\'ark\"ozy~\cite{GS_2023}.
A B-coloring of the edges of a graph is one in which every copy of $K_{1,2}$ and every copy of $K_{2,2}=C_4$ is totally multicolored. 
The minimum number of colors sufficient for a B-coloring of a graph $G$ is denoted $q_B(G)$ and is called the \emph{B-coloring number} of $G$.
In the same paper, Gy\'arf\'as and S\'ark\"ozy also define \emph{A-colorings}, which are proper edge colorings in which no two color classes can induce a path or cycle on four edges. 
The corresponding \emph{A-coloring number} is denoted $q_A(G)$.
Gy\'arf\'as and S\'ark\"ozy provide a connection between $q_A(G),q_B(G)$ and the (7,4)-conjecture first posed by Erd\H{o}s in \cite{BES_1973}, by showing that a certain bound on the A-coloring or B-coloring numbers on any balanced bipartite graph would prove the conjecture.
For any graph $G$, there are trivial bounds of $\Delta(G)\leq\chi'(G)\leq q_B(G)\leq \bigl(\Delta(G)\bigr)^2$. 

In the case where $G$ is planar, Gy\'arf\'as, Martin, Ruszink\'o, and S\'ark\"ozy~\cite{GMRS_2024} proved that $q_B(G)\leq 2\Delta(G)+8$ for planar $G$ and conjectured that $q_B(G)\leq 2\Delta(G)$ when $\Delta(G)$ is sufficiently large.
The graph $K_{2,d}$ gives an example of a planar graph for which $q_B\left(K_{2,d}\right)=2\Delta\left(K_{2,d}\right)=2d$, so this bound is best possible. 
Furthermore, there are examples of graphs for which the requirement of $\Delta(G)$ being sufficiently large is necessary. 
The complete tripartite graph $K_{2,2,2}$ is planar and $\Delta\bigl(K_{2,2,2}\bigr)=4$, but $q_B\bigl(K_{2,2,2}\bigr)=12$. 

In a later manuscript, Martin, Ruszink\'o, and S\'ark\"ozy~\cite{MRS_2026} established that $q_B(G)\leq \max\{2\Delta(G)+4,22\}$.
Finally, the conjecture from~\cite{GMRS_2024} was verified by Kong, Wang, and Zheng~\cite{KWZ_2026}, who proved the following:
\begin{theorem}[Kong, Wang, Zheng~\cite{KWZ_2026}]
    If $G$ is a planar graph with maximum degree $\Delta\geq 38$, then $q_B(G)\leq 2\Delta$.
    Furthermore, $q_B(G)\leq 2\Delta+6$ and, if $\Delta\geq 12$, then $q_B(G)\leq 2\Delta+4$. \label{thm:KWZ_2026}
\end{theorem}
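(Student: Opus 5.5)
We argue by a minimal counterexample combined with discharging. Suppose the bound $q_B(G)\leq 2\Delta$ for $\Delta\geq 38$ fails, and fix $\Delta\geq 38$. Among planar graphs with maximum degree at most $\Delta$ and no B-coloring from a palette of $2\Delta$ colors, choose $G$ with $\card{E(G)}$ minimum. For every edge $e$, the graph $G-e$ has a B-coloring with $2\Delta$ colors. We try to extend it to $e=uv$.

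The color of $e$ must avoid the colors on edges at $u$ and $v$, at most $d(u)+d(v)-2$ colors. It must also avoid, for each $4$-cycle $uvxy$ through $e$, the colors of the three other edges. Edges at $u$ and $v$ are already counted, so the only new forbidden colors are those on the edges $xy$ with $x\in N(v)\setminus\set{u}$ and $y\in N(u)\setminus\set{v}$. In a planar graph, the bipartite graph formed by these "opposite" edges between $N(u)$ and $N(v)$ is severely restricted by $K_{3,3}$-freeness together with the planar embedding; typically only a bounded number of them occur. So $e$ can be colored greedily whenever
\[
d(u)+d(v)-2+\#\{\text{opposite edges}\}<2\Delta.
\]
This yields reducible configurations. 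An edge with $d(u)+d(v)\leq \Delta+c$ for a modest constant $c$ is reducible unless very many opposite edges exist.

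When greedy extension fails we use recoloring. We uncolor a small cluster, say $e$ together with several edges at a low-degree vertex $v$, and recolor it using a list argument. Each uncolored edge keeps a list of size roughly $2\Delta-(\Delta+\text{small})$, and Hall's theorem or a system-of-distinct-representatives argument colors the cluster simultaneously. The requirement $\Delta\geq 38$ is what guarantees that these residual lists have size at least the number of conflicts inside the cluster. From this we extract the standard list: $\delta(G)\geq 2$; a $2$-vertex has both neighbors of large degree; a vertex of degree $k\leq 5$ or so has all neighbors of degree at least $\Delta-c_k$; and certain light faces and triangles cannot be adjacent to too many small vertices.

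Finally, we discharge. Give each vertex charge $d(v)-4$ and each face charge $\ell(f)-4$; by Euler's formula the total is $-8$. Faces of length at least $4$ and vertices of degree at least $4$ start nonnegative. Big vertices, those of degree at least $\Delta-c\geq 38-c$, send charge to their incident triangles and to adjacent $2$- and $3$-vertices. We then verify that every element ends nonnegative, which contradicts the total of $-8$.

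The main obstacle is the step bounding opposite edges, and the recoloring needed when a $2$-vertex lies on many $4$-cycles. The extremal example is $K_{2,d}$, where both endpoints of an edge see $\Delta$ colors and all the colors of the other side. For a $2$-vertex $v$ with neighbors $u,w$ of degree near $\Delta$, the $4$-cycles $uvwx$ make $uv$ conflict with essentially all edges at $w$. This is exactly tight at $2\Delta$. Here I would uncolor both edges at $v$ and exploit the freedom of choosing their colors jointly, pairing color $a$ on $uv$ with $b$ on $vw$ while avoiding the edges $wx$ and $ux$ for common neighbors $x$. Planarity limits the vertices adjacent to both $u$ and $w$ that are not $2$-vertices. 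Making this counting close with the constant $38$ is where I expect the real work to be. The weaker bounds $2\Delta+6$ and $2\Delta+4$ for $\Delta\geq 12$ follow from the same configurations with the extra slack, which makes the counting far easier.
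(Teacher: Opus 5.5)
\textbf{There is a genuine gap: the proposal is a discharging template rather than a proof.} You give no discharging rules and never show that your configurations form an unavoidable set. More seriously, the list-size heuristic behind your reducibility claims fails in exactly the configuration that carries the difficulty.

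You assert that the ``opposite'' edges are typically bounded. You also assert that each uncolored edge in a cluster at a low-degree vertex keeps a list of size about $2\Delta-(\Delta+O(1))$. Both fail for a bunch. Take $B(x,y;m)$ with $xy\notin E(G)$, $\deg(x)=\deg(x,y)=\Delta$, and consecutive brothers joined by rungs $z_jz_{j+1}$. Let $v=z_4$. In a B-coloring of $G-v$, the edge $xv$ conflicts with:
\begin{itemize}
\item the $\Delta-1$ edges at $x$,
\item the $\Delta-1$ edges $yz_j$,
\item the rungs $z_2z_3$ and $z_5z_6$.
\end{itemize}
That is $2\Delta$ edges, which may carry $2\Delta$ distinct colors, so the list of $xv$ can be empty. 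The opposite edges here number about $\Delta$, not $O(1)$, and no Hall or SDR argument on the star at $v$ rescues this. Your list declares this $4$-vertex reducible, since it has small neighbors $z_3,z_5$, but nothing you wrote proves it.

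Conversely, the $2$-vertex you single out is not an obstacle. If $N(v)=\{u,w\}$, then $uv$ sees at most $(\Delta-1)+(\deg(u,w)-1)\le 2\Delta-2$ colors, and likewise $vw$, so both can be colored directly. The constant $38$ is also left unexplained.

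\textbf{What the paper uses instead.} The paper proves the first part as a consequence of its Theorem~\ref{thm:dd2c}: $q_B(G)\le\Delta+\max\{\Delta_2,38\}\le 2\Delta$ for $\Delta\ge 38$, since $\Delta_2\le\Delta$. It notes that the same structural input is key in \cite{KWZ_2026}. Its two ingredients are exactly what your proposal lacks.
\begin{itemize}
\item \emph{Unavoidable set.} This is Theorem~\ref{thm:bunchminor38}. Either there is a minor vertex with a precomplete star of weight at most $38$, or there is a bunch $B(x,y;m)$ with $\deg(x)\ge 26$ and $m\ge\deg(x)/5\ge 6$. The weight bound is where $38$ comes from: each edge at the minor vertex then sees at most $\Delta+37$ colors.
\item \emph{Reducibility of the bunch.} This is done by recoloring, not by list sizes. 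Delete $z_4$. Recolor $z_2z_3$ with whichever of the colors on $xz_6,yz_6$ is not used on $z_1z_2$. Recolor $z_5z_6$ with a color from $xz_2,yz_2$. Both rungs then repeat colors already forbidden for $xz_4$ and $yz_4$, so each of these sees at most $2\Delta-2$ distinct colors. The cases $m=6$ and $xy\in E(G)$ are handled separately.
\end{itemize}

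\textbf{The additive bounds.} The bounds $2\Delta+6$, and $2\Delta+4$ for $\Delta\ge12$, do not follow from the same configurations ``with extra slack.'' The light-star count only bounds the forbidden colors per edge by $\Delta+37$. This guarantees no free color from $2\Delta+6$ when $\Delta\le 31$, and for $\Delta<26$ the bunch alternative cannot occur. So small $\Delta$ needs different, lighter configurations. The paper does not prove these parts; it quotes them from \cite{KWZ_2026}.
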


In this paper, we use the parameter $\Delta_2(G)$, which denotes the maximum co-degree of the graph. 
That is, $\Delta_2(G)$ is the maximum $t$ such that $G$ has a subgraph isomorphic to $K_{2,t}$. 
Note that, for any graph $G$, 
\begin{align}
    \max\bigl\{\Delta(G),2\Delta_2(G)\bigr\}\leq\max\bigl\{\chi'(G),2\Delta_2(G)\bigr\}\leq q_B(G)\leq\max\bigl\{2\Delta(G),76\bigr\} . \label{eq:Delta2:LB}
\end{align}
Observe that if $\Delta\geq 3$, then the book graph $G=K_{1,1,\Delta-1}$ is a planar graph such that $\chi'(G)=\Delta$, $2\Delta_2(G)=2\Delta-2$ but $q_B(G)=2\Delta-1$ because all edges must have a different color and so it strictly exceeds the lower bound in~\eqref{eq:Delta2:LB}.

We will refine the bound in Theorem~\ref{thm:KWZ_2026} by including $\Delta_2(G)$ in the upper bound. 
\begin{theorem}
\label{thm:dd2c}
    If $G$ is a planar graph with $\Delta=\Delta(G)$ and $\Delta_2=\Delta_2(G)$, then $q_B(G)\leq\Delta+\max\set{\Delta_2,38}.$
\end{theorem}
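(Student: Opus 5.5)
Let $k=\Delta+\max\{\Delta_2,38\}$. I would argue by minimal counterexample: let $G$ be a planar graph with $q_B(G)>k$ that minimizes $|V(G)|+|E(G)|$. Deleting edges or vertices does not increase $\Delta$ or $\Delta_2$, so every proper subgraph of $G$ has a B-coloring with $k$ colors. The plan is to find a reducible configuration in $G$, which gives a contradiction.

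\textbf{The basic extension count.} Take a B-coloring of $G-e$ with $e=uv$, and count the colors forbidden at $e$. There are at most $(d(u)-1)+(d(v)-1)$ colors on edges incident with $e$. The $C_4$ condition adds more. For each neighbor $x$ of $u$ with $x\ne v$, and each edge $xy$ with $y\in N(v)\setminus\{u\}$, the color of $xy$ is forbidden. The number of such edges at a fixed $x$ is at most the number of common neighbours of $x$ and $v$. That count is at most $\Delta_2$ overall, but I need the sharper count $\sum_{x}\bigl|N(x)\cap N(v)\setminus\{u\}\bigr|$, which equals the number of $C_4$'s through $uv$. The easy bound is about $2\Delta-2+(\Delta-1)\Delta_2$, which is far too weak. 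So the key point is that in a planar graph the edges $xy$ that close $4$-cycles through $uv$ are few \emph{when the degrees are small}.

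Concretely, the set $N(u)\cup N(v)$ spans a bipartite-like planar structure. By planarity, the edges between $N(u)\setminus\{v\}$ and $N(v)\setminus\{u\}$ form an outerplanar-like subgraph. So when $d(u)$ is small, the number of such edges is at most roughly $d(u)+d(v)+O(1)$. Hence the number of forbidden colors for $e$ is at most about $d(v)+\Delta_2+c\cdot d(u)$, with the $\Delta_2$ term coming from the $K_{2,t}$ structure between $u$ and the neighbours of $v$. The bound $\Delta+\Delta_2$ is then exactly what is needed, and the constant $38$ absorbs the additive $O(d(u))$ terms when $d(u)$ is bounded.

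\textbf{Reducible configurations and discharging.} I would prove the following is reducible: an edge $uv$ with $d(u)\le 2$, together with variants for light vertices of degree at most about $5$ whose neighbours satisfy suitable degree and face conditions, mirroring the configuration list of Kong, Wang and Zheng~\cite{KWZ_2026}. Their argument for Theorem~\ref{thm:KWZ_2026} works with $2\Delta$ colors and $\Delta\ge 38$. I would re-run it, replacing each place where they bound forbidden colors coming from $4$-cycles by $\Delta$ with the bound $\Delta_2$ (plus constants). The threshold $38$ plays the same role: it ensures the light-vertex configurations leave a free color. After that, a standard discharging argument with charge $d(v)-4$ on vertices and $d(f)-4$ on faces, which sums to $-8$, shows that a planar graph avoiding all reducible configurations cannot exist.

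\textbf{Main obstacle.} The hardest step is the recoloring or extension in configurations where the naive count slightly exceeds $k$. In these, the $C_4$ constraints through $uv$ must be split carefully into two parts: those through $u$'s side, bounded by $\Delta_2$ via common neighbours, and those through $v$'s side, bounded using the small degree of $u$. My first attempt would be to uncolor several edges at a light vertex simultaneously and color them greedily in order of fewest available colors. Where that is not enough, I would use a Hall-type or Kempe-swap argument, as in~\cite{KWZ_2026}, checking at each step that $\Delta_2$ rather than $\Delta$ is what bounds the $C_4$-conflicts.
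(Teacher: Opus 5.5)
Your reduction framework is fine: deletions do not increase $\Delta$ or $\Delta_2$, and vertices of degree at most $2$ are indeed reducible. The gap is that your key quantitative claim is false. You claim an edge $uv$ at a light vertex $u$ sees at most about $d(v)+\Delta_2+c\,d(u)$ colors. This does not follow from your own outerplanarity estimate: $d(v)-1$ edges at $v$ plus roughly $d(v)$ edges closing $4$-cycles gives about $2d(v)$, which is just the $2\Delta$ of \cite{KWZ_2026}.

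It also fails concretely. Let $u$ have neighbors $v,x,x'$, let $N(v)=\{u,b_1,\ldots,b_{2s}\}$, and join $x$ to $b_1,\ldots,b_s$ and $x'$ to $b_{s+1},\ldots,b_{2s}$. This graph is planar with $\Delta=2s+1$ and $\Delta_2=s+1$. Yet $uv$ conflicts with $4s+2$ edges, and a B-coloring of $G-u$ can make these cover all $\Delta+\max\{\Delta_2,38\}=3s+2$ colors once $s\ge 37$. The co-degree replaces the second $\Delta$ only when the $4$-cycle conflicts of the edge essentially all pass through a single other heavy neighbor of $u$. Your sketch supplies no structure guaranteeing that.

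Moreover, even your claimed form would not suffice. When $\Delta_2\ge 38$ the palette is exactly $\Delta+\Delta_2$, so there is no slack for an additive $c\,d(u)$ term, and the constant $38$ absorbs nothing.

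What is missing is exactly the structural input and the specific reductions, which you defer to ``mirroring'' \cite{KWZ_2026} plus an unspecified $d-4$ discharging. The paper, like \cite{KWZ_2026}, rests on Theorem~\ref{thm:bunchminor38}, which gives one of two structures.

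\emph{Light star.} There is a minor vertex $u$ whose precomplete star has weight at most $38$. Then all but one neighbor are light, each edge at $u$ conflicts with at most $\Delta+38-\deg(u)$ edges of $G-u$, and greedy coloring works.

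\emph{Bunch.} There is a bunch $B(x,y;m)$ with $\deg(x)\ge 26$ and $m\ge\deg(x)/5$. One deletes the brother $z_4$, which has degree at most $4$. The $4$-cycle conflicts of $xz_4$ through $y$ are the edges $yw$ with $w\in N(x)\cap N(y)$; this is where $\Delta_2$ enters, giving at most $(\deg(x)-1)+(\deg(x,y)-1)+2\le\Delta+\Delta_2$ conflicts. Since that can equal the palette size, one must still recolor $z_2z_3$ with a color of $xz_6$ or $yz_6$, and $z_5z_6$ with a color of $xz_2$ or $yz_2$. Both then repeat colors already forbidden at $xz_4$ and $yz_4$. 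The cases $m=6$ (using $\deg(x)\le 30$) and $xy\in E(G)$ need separate counts.

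A generic appeal to Hall-type or Kempe arguments does not recover any of this, so as written the proposal is a plan rather than a proof.
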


This result improves upon the main result in Theorem~\ref{thm:KWZ_2026} of $q_B(G)\leq 2\Delta$ for $\Delta\geq 38$ and strictly so except in the case where $\Delta=38$. 

The rest of the paper is organized as follows: 
In Section~\ref{sec:term}, we present the terminology that is used throughout the paper. 
In Section~\ref{sec:proof}, we present the proof of Theorem~\ref{thm:dd2c}.
In Section~\ref{sec:conc}, we offer some concluding remarks and open questions, followed by acknowledgements and references.

\section{Terminology}
\label{sec:term}
For vertex $u$, we denote $N(u)$ to be the neighborhood of $u$ and $\deg(u)=|N(u)|$.
For distinct vertices $u,w$, we denote $\deg(u,w)=\bigl|N(u)\cap N(w)\bigr|$.

We will borrow language and terminology from a key paper by Borodin, Broersma, Glebov, and van den Heuvel~\cite{BBGH_2002}. 

Recall that a \emph{plane graph} is a drawing of a graph in the plane without edge-crossings and a \emph{planar graph} is a graph that admits such a drawing.

A cycle in a plane graph is said to be \emph{separating} if there exists a vertex that lies inside of the cycle. 
We wish to emphasize that whether or not a cycle is separating can depend on the particular drawing that is chosen. 
\begin{definition}
    \label{def:bunch}
    For distinct vertices $x$ and $y$ and integer $m\geq 3$, a \emph{bunch} $B(x,y;m)$ in a plane graph $G$ is an induced subgraph in which the $x$ and $y$ are designated as \emph{poles} and there are paths $P_1,P_2,\ldots,P_m$ such that 
\begin{enumerate}[(a)]
    \item each $P_i$ has length 1 or 2 and joins $x$ and $y$,
    \item the cycle formed by $P_i$ and $P_{i+1}$ is not separating in $G$, for each $i\in\{1,\ldots,m-1\}$, and
    \item $m$ is maximal, in that no path $P_0$ or $P_{m+1}$ can be added to the bunch while preserving the first two properties.
\end{enumerate}
If a path $P_i$ is of the form $xz_iy$, we call vertex $z_i$ a \emph{brother}.
\end{definition}

We note that, in a bunch, at most one of the paths $P_i$, $i\in\{1,\ldots,m\}$ can have length 1, otherwise the graph is no longer simple. 
Figure \ref{fig:bunch_example} shows a bunch without an edge between $x$ and $y$ and a bunch with an edge between $x$ and $y$. 
\begin{figure}
    \centering 
    ~\hfill
    \begin{minipage}{0.4\textwidth}
        \centering
        \begin{tikzpicture}

\tikzset{
line/.style={-,very thick},
blackdotl/.style={color=black,dashed}
}
\def\lenbun{7}
\def\hei{2}
\def\wid{1.0}
\def\radp{4pt}
\def\offset{0.5}
\useasboundingbox (0-\offset,0-\offset) rectangle ({(\lenbun-1)*\wid+\offset},2*\hei+\offset);
\path ({(\lenbun-1)*0.5*\wid},0) coordinate (y);
\path ({(\lenbun-1)*0.5*\wid},{2*\hei}) coordinate (x);
    \foreach \i in {1,...,\lenbun} {
        \path ({(\i-1)*\wid},\hei) coordinate (z\i);
    };
    \foreach \i [evaluate=\i as \inext using {int(\i+1)}] in {1,...,\lenbun} {
        \draw[line] (x) -- (z\i) -- (y);
        \ifnum \i<\lenbun
            \draw[line,blackdotl] (z\i) -- (z\inext);
        \fi
    };
    \foreach \i in {1,...,\lenbun} {
        \draw[thin, white, fill=black] (z\i) circle[radius=\radp];
    };
    \draw[thin, white, fill=black] (x) circle[radius=\radp];
    \draw[thin, white, fill=black] (y) circle[radius=\radp];
    \node[above right,xshift=2pt,yshift=-2pt] at (x) {$x$};
    \node[below right,xshift=2pt,yshift=2pt] at (y) {$y$};
    \node[below left,xshift=7pt,yshift=-2pt] at (z1) {$z_1$};
    \node[above left,xshift=7pt,yshift=+2pt] at (z2) {$z_2$};
    \node[below left,xshift=7pt,yshift=-2pt] at (z3) {$z_3$};
    \node[above, xshift=-7pt,yshift=+2pt] at (z4) {$z_4$};
    \node[below right,xshift=-6pt,yshift=-2pt] at (z5) {$z_5$};
    \node[above right,xshift=-6pt,yshift=+2pt] at (z6) {$z_6$};
    \node[below right,xshift=-6pt,yshift=-2pt] at (z7) {$z_7$};
\end{tikzpicture}
    \end{minipage}~
    \hfill
    \begin{minipage}{0.4\textwidth}
        \centering
        \begin{tikzpicture}

\tikzset{
line/.style={-,very thick},
blackdotl/.style={color=black,dashed}
}
\def\lenbun{6}
\def\hei{2}
\def\wid{1.2}
\def\radp{4pt}
\def\offset{0.5}
\useasboundingbox (0-\offset,0-\offset) rectangle ({(\lenbun-1)*\wid+\offset},2*\hei+\offset);
\path ({(\lenbun-1)*0.5*\wid},0) coordinate (y);
\path ({(\lenbun-1)*0.5*\wid},{2*\hei}) coordinate (x);
    \foreach \i in {1,...,\lenbun} {
        \path ({(\i-1)*\wid},\hei) coordinate (z\i);
    };
    \foreach \i [evaluate=\i as \inext using {int(\i+1)}] in {1,...,\lenbun} {
        \draw[line] (x) -- (z\i) -- (y);
        \ifnum \i<3
            \draw[line,blackdotl] (z\i) -- (z\inext);
        \fi
        \ifnum \i>3
            \ifnum \i<\lenbun
                \draw[line,blackdotl] (z\i) -- (z\inext);
            \fi
        \fi
    };
    \draw[line] (x) -- (y);
    \foreach \i in {1,...,\lenbun} {
        \draw[thin, white, fill=black] (z\i) circle[radius=\radp];
    };
    \draw[thin, white, fill=black] (x) circle[radius=\radp];
    \draw[thin, white, fill=black] (y) circle[radius=\radp];
    \node[above right,xshift=2pt,yshift=-2pt] at (x) {$x$};
    \node[below right,xshift=2pt,yshift=2pt] at (y) {$y$};
    \node[below left,xshift=6pt,yshift=-2pt] at (z3) {$z_{i-1}$};
    \node[above left,xshift=7pt,yshift=1pt] at (z2) {$z_{i-2}$};
    \node[below left,xshift=10pt,yshift=-2pt] at (z1) {$z_{i-3}$};
    \node[below right,xshift=-4pt,yshift=-2pt] at (z4) {$z_{i+1}$};
    \node[above right,xshift=-8pt,yshift=1pt] at (z5) {$z_{i+2}$};
    \node[below right,xshift=-10pt,yshift=-2pt] at (z6) {$z_{i+3}$};

\end{tikzpicture}
    \end{minipage}~
    \hfill~
    \caption{From Definition~\ref{def:bunch}, a bunch with no edge $(x,y)$ and $m=7$ (left) and a bunch with edge $(x,y)$ (right). Each dotted edge may or may not exist in the graph}
    \label{fig:bunch_example}
\end{figure}

\begin{definition}
    A \emph{precomplete star centered at} vertex $u$ is the star formed by $u$, together with $\deg(u)-1$ of its neighbors, $v_1,\ldots,v_k$, where $k=\deg(u)-1$. 
    The \emph{weight} of this star is defined to be $\sum_{i=1}^k\deg(v_i)$. 
    The vertex $u$ is a \emph{minor vertex} if $\deg(u)\leq 5$.
\end{definition}
Note that Euler's theorem establishes $e(G)\leq 3v(G)-6$ for every planar graph $G$ on at least three vertices, which implies that every planar graph has a minor vertex. 

Theorem~\ref{thm:bunchminor38} is a key ingredient for the proof of Theorem~\ref{thm:KWZ_2026} in~\cite{KWZ_2026} and is also key to the proof of Theorem~\ref{thm:dd2c}.
\begin{theorem}[Borodin, Broersma, Glebov, van den Heuvel~\cite{BBGH_2002}]
\label{thm:bunchminor38}
    Any planar graph $G$ contains one of the following substructures:
    \begin{enumerate}[(1)]
        \item A precomplete star with weight at most $38$ centered at a minor vertex. (This is vacuously satisfied if $G$ has a leaf, i.e. if $\delta(G)=1$.)
        \item A bunch $B(x,y;m)$ where $\deg(x)\geq26$ and $m\geq\deg(x)/5$.
    \end{enumerate}
\end{theorem}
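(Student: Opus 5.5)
We argue by contradiction with a discharging argument. Suppose $G$ is a plane graph containing neither substructure (1) nor substructure (2), and choose it with $|V(G)|+|E(G)|$ minimal. We may assume $G$ is connected and $\delta(G)\ge 2$, since a leaf vacuously gives (1).

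The first step is to collect local consequences of the absence of (1).
\begin{itemize}
\item Let $u$ be a minor vertex of degree $d\le 5$. For every choice of $d-1$ of its neighbours the degree sum exceeds $38$. Hence the $d-1$ smallest neighbour degrees of $u$ sum to at least $39$.
\item In particular, a $2$-vertex has both neighbours of degree at least $39$.
\item A $3$-vertex has its two smaller neighbours summing to at least $39$, so at least two of its neighbours have degree at least $20$.
\item In the same way, $4$-vertices and $5$-vertices have many neighbours of large degree.
\end{itemize}
We also use minimality to clean up faces. A face of length at least $4$ with a chord available can often be treated directly. More carefully, we keep $G$ as is and let large faces carry positive charge, rather than triangulating, since adding edges would change degrees.

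The second step is the charge and the rules. Assign $\mu(v)=\deg(v)-4$ to each vertex and $\mu(f)=\deg(f)-4$ to each face. By Euler's formula the total charge is $-8$. Only $2$-, $3$-vertices and triangles start negative; $4$-vertices start at zero, and $5$-vertices, which start at $+1$, are in fact donors to neighbouring triangles. The rules are as follows.
\begin{enumerate}[(R1)]
\item Each face of length at least $4$ sends its excess to its incident minor vertices.
\item Each vertex $v$ with $\deg(v)\ge 26$, called \emph{major}, sends a fixed amount to each incident triangle and to each adjacent minor vertex.
\item Medium vertices, of degree roughly between $6$ and $25$, give to incident triangles whatever they can afford.
\end{enumerate}
The amounts must be tuned so that each minor vertex and each triangle ends up nonnegative. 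For example, a triangle $uvw$ with $u$ minor has, by the consequences above, at least one major or large vertex among $v,w$, and that vertex pays for it. The constants $38$ and $26$ should come out of this balancing.

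The third step, and the main obstacle, is showing that a major vertex $x$ stays nonnegative. Its charge is $\deg(x)-4$, but it may have nearly all its neighbours minor. The key point is that the absence of (2) forces every bunch $B(x,y;m)$ to have $m<\deg(x)/5$. Walking around $x$ in the rotation, consider maximal runs of consecutive small neighbours $z_i$ that are brothers for a common second pole $y$. Each run lies in a single bunch with $x$, so it has fewer than $\deg(x)/5$ elements. Moreover, between runs belonging to different second poles the rotation must pass through a vertex or face that breaks the bunch structure: a separating $4$-cycle, a non-triangular face, or a neighbour that is not a common neighbour.

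We then argue that these breaks supply $x$ with extra savings. At a break, either the minor vertex involved has two large neighbours besides $x$, so $x$ only has to pay a reduced share, or an adjacent face is large and contributes. Counting the amount $x$ pays per neighbour in the worst rotation pattern, subject to runs of length less than $\deg(x)/5$, should give a total of at most $\deg(x)-4$ once $\deg(x)\ge 26$.

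It remains to check the other vertex and face types routinely. Then every element ends nonnegative, contradicting the total of $-8$. I expect the delicate part to be exactly this worst-case accounting around major vertices, in particular brothers of degree $2$ and $3$ shared between two major poles. I would handle it by having such a brother draw half its deficit from each pole, and by verifying the bunch bound on a per-pole basis.
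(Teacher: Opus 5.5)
There is a genuine gap. Note first that the paper gives no proof of this statement; it quotes it from the Borodin--Broersma--Glebov--van den Heuvel report, where it comes out of a long discharging analysis. Your proposal describes that kind of argument but leaves out everything that would make it a proof:
R1--R3 carry no amounts (``a fixed amount'', ``whatever they can afford''), and the remaining vertex and face types are declared routine. The decisive claim is that a vertex $x$ with $\deg(x)\ge 26$ ends nonnegative when every bunch at $x$ has fewer than $\deg(x)/5$ paths. You only say it ``should'' hold.

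That verification \emph{is} the theorem: the constants $38$, $26$ and $1/5$ are what it has to produce, and there is no slack in it. For example, suppose majors are joined by runs of $2$-vertices along the edges of a plane triangulation. Let each face split its excess equally among its minor vertices, and let each $2$-vertex take the rest of its deficit equally from its two poles. Then a major with exactly six runs ends at exactly $0$.

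Moreover, the two structural facts your plan leans on are false.

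First, a triangle $uvw$ with $u$ minor need not contain a major or large vertex. Take three mutually adjacent $4$-vertices, each with two further neighbours of degree at least $31$. This is compatible with the absence of (1), since every precomplete star at them has weight at least $4+4+31=39$. Yet the triangle has charge $-1$, its vertices have charge $0$, and none of R1--R3 sends it anything.

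Second, your dichotomy at breaks fails. Let $\deg(x)=36$ and let its neighbours $z_1,\dots,z_{36}$, in rotation order, form a cycle. Split them into six consecutive runs of six, join the $j$-th run to a pole $y_j$ with $\deg(y_j)\ge 31$, and join consecutive poles. Then around $x$ neither (1) nor (2) occurs:
\begin{itemize}
\item every $z_i$ has degree $4$ and star weight at least $39$;
\item the bunches $B(x,y_j;6)$ satisfy $6<36/5$ and $6<\deg(y_j)/5$, and the other bunches at $x$ have only $3$ paths.
\end{itemize}
At each break, the end brother $z_b$ has only one large neighbour besides $x$. The faces there are the triangle $xz_bz_{b+1}$ and the $4$-face $z_by_jy_{j+1}z_{b+1}$, which has charge $0$. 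All $36$ faces at $x$ are triangles whose other two vertices have charge $0$. So under your rules $x$ alone must pay at least $36>\deg(x)-4$.

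The only spare charge nearby sits at the poles $y_j$. They must therefore pay, through the shared minor vertices, for triangles at the other pole, and these cross-payments must balance at both poles simultaneously against $m<\deg/5$. No accounting that uses only the charge of $x$ and of its incident faces can work. Designing and checking that exchange is the missing core of the argument.
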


\section{Proof of Theorem~\ref{thm:dd2c}}
\label{sec:proof}
Fix $\Delta$ and $\Delta_2$. 
For a graph $G$ with $\Delta(G)\leq\Delta$ and $\Delta_2(G)\leq\Delta_2$, we call a B-coloring of $G$ \textit{small} if it uses at most $\Delta+\max\set{\Delta_2,38}$ colors. 
\begin{proof}[Proof of Theorem \ref{thm:dd2c}]
    We proceed by induction on $\card{V(G)}$. 
    Note that by deleting a vertex $v$ from $G$, the maximum degree does not increase, nor does the maximum co-degree. 
    Furthermore, no conflicts between edges that are not incident to $v$ are removed by the deletion of $v$.
    Therefore, we may B-color $G-v$ and, if enough colors remain, color the edges incident to $v$. 
    
    The base case is $\max\bigl\{1,3\card{V(G)}-6\bigr\}\leq\Delta+\max\{\Delta_2,38\}$ (which is satisfied if $n\leq 15$). 

    We have two main cases, defined according to Theorem~\ref{thm:bunchminor38}:\\
    
    \noindent\textbf{Case 1.} $G$ has a precomplete star of weight at most 38 at a minor vertex $v$. \\

    Write $H=G-v$. By induction, $H$ has a B-coloring $\gamma$, which is small. 
    When considered as a coloring of $G$, there are still no two edges with the same color that conflict in $G$. 
    Now we extend $\gamma$ to a B-coloring of $G$.
    Let $k=\deg(v)-1$ and let $v_1,\dots,v_{k+1}$ be the neighbors of $v$, and write $e_i=vv_i$ for $i=1,\ldots,k+1$. 

    For any $e_i$, a conflicting edge must be incident to $v_j$ for some $j\in\{1,\ldots,k+1\}$, otherwise the edges would not be in the same $K_{1,2}$ or the same $K_{2,2}$. 
    Hence the number of edges conflicting with $e_i$, in $H$, is at most 
    \begin{equation}
        \sum_{i=1}^{k+1}\bigl(\deg(v_i)-1\bigr)\leq\Delta+38-(k+1).
    \end{equation}
    Since there are $\Delta+\max\{\Delta_2,38\}\geq \Delta+38$ colors in the palette, there are at least $k+1$ colors available for $e_1,\ldots,e_{k+1}$
    So the edges $e_1,\ldots,e_{k+1}$ can be colored greedily from those remaining colors. 
    This concludes Case 1. \\

    \noindent\textbf{Case 2.} $G$ has a bunch $B(x,y;m)$ where $\deg(x)\geq 26$ and $m\geq\deg(x)/5$. \\

    Observe that $m\geq\lceil\deg(x)/5\rceil\geq\lceil 26/5\rceil\geq 6$.
    If $xy\not\in E(G)$, then write $z_1,\dots,z_m$ for the brothers.
    If $xy\in E(G)$, then write $z_1,\dots,z_{i-1},z_{i+1},\ldots,z_m$ for the brothers, where $P_i$ is the path of length 1.\\

    \noindent\textbf{Case 2a.} $xy\not\in E(G)$ and $m\geq 7$. \\
        
    Let $v=z_4$ and write $H=G-v$. 
    Note that $\deg(v)\leq 4$ and the edges incident to $v$ are $xv$, $yv$, $vz_3$ if it exists, and $vz_5$ if it exists. 
    Let $\gamma$ be a B-coloring of $H$ that is small. 
    Observe that if $vz_3$ exists, then the edges with which it conflicts is a subset of $\bigl\{xz_i,yz_i: i\in\{2,3,4,5\}\bigr\}$ as well as $z_2z_3$ and $vz_5$, if they exist. 
    Similarly for $vz_5$.
    Therefore, because the palette has size at least 14, then if there is a B-coloring of $H$ which is extended to color $xv$ and $yv$ such that no conflict is created, then $vz_3$ and $vz_5$ can be colored greedily if they exist.

    However, the edges that conflict with $xv$ in graph $H$ are any edge incident to $x$ plus edges of the form $yw$ such that $xw\in E(H)$ (these may not be part of the bunch) plus $z_2z_3$ and $z_5z_6$, if they exist.
    This gives at most $(\deg(x)-1)+(\deg(x,y)-1)+2$ conflict edges, which is at most $\Delta+\Delta_2$ if both $z_2z_3$ and $z_5z_6$ exist. 
    The same bound holds for the number of edges that conflict with $yv$.
    If $z_2z_3$ exists, then we will recolor it to have a color that exists among the edges that conflict with $xv$ and among the edges that conflict with $yv$. 
    We will do the same for $z_5z_6$ if it exists. 
    Then $xv$ and $yv$ conflict with at most $\Delta+\Delta_2-2$ colors and can be colored greedily. 
    Afterwards, $vz_3$ and $vz_5$ can be colored greedily to complete the B-coloring.

    Now we show the recoloring.

    If the edge $z_2z_3$ exists, then in $H$, it can only conflict with edges $xz_i$ for $i\in\{1,2,3,4,5\}$, $yz_i$ for $i\in\{1,2,3,4,5\}$, and $z_1z_2$ if it exists. 
    Of these, they all conflict with each of $xz_6$ and $yz_6$, except $z_1z_2$, which does not conflict with either $xz_6$ or $yz_6$. 
    Hence, only $z_1z_2$ may share a color with either $xz_6$ or $yz_6$. 
    Thus, if $z_2z_3$ exists, we may recolor $z_2z_3$ with whichever of the colors on $xz_6$ or $yz_6$ is not used on $z_1z_2$, if it exists; otherwise, we may use either color.
    Thus, $z_2z_3$ receives the same color as either $xz_6$ or $yz_6$.
    
    By parallel logic, if $z_5z_6$ exists, we can recolor $z_5z_6$ with one of the colors on $xz_2$ or $yz_2$.
    Consequently, the number of colors that conflict with $xv$ and the number of colors that conflict with $yv$ are at most $\Delta+\Delta_2-2$ and we may finish the B-coloring with the greedy procedure described above. 
    
    This concludes Case 2a. \\

    \noindent\textbf{Case 2b.} $xy\not\in E(G)$ and $m=6$. \\
        
    By Theorem~\ref{thm:bunchminor38}, $\deg(x)\leq 5m=30$. 
    Let $v=z_4$ and write $H=G-v$. 
    First, note that $yv$ conflicts, in $H$, with at most $\Delta-1$ edges incident to $y$, at most $\deg(x)-1\leq 29$ edges incident to $x$, and $z_2z_3$ and $z_5z_6$ if they exist. 
    This leaves at least 7 available colors for $yv$.
    
    Next, note that $xv$ conflicts, in $H$, with at most $\deg(x)-1\leq 29$ edges incident to $x$, $\min\bigl\{\deg(x)-1,\Delta_2-1\bigr\}$ edges incident to $y$, and possibly $z_2z_3$,$z_5z_6$ as before.
    This leaves at least 7 available colors for $xv$.

    If $z_3v$ exists, then it can conflict with $\set{xz_i, yz_i : i\in\set{2,3,5,6}}$ plus $z_2z_3$ if it exists. 
    If $z_5v$ exists, then it can conflict with $\set{xz_i,yz_i:i\in\set{2,3,5,6}}$ plus $z_5z_6$ if it exists. 
    This leaves at least 29 available colors for $z_3v$ and for $z_5v$.
    
    Consequently, we may finish the B-coloring greedily. 

    This concludes Case 2b. \\

    \noindent\textbf{Case 2c.} $xy\in E(G)$. \\
            
    By the definition of $i$, if $2\leq i\leq m-1$, both $xyz_{i-1}$ and $xyz_{i+1}$ are triangles. 
    By symmetry, we may assume that $i\geq \floor{m/2}+1$, thus $i\geq 4$. 
    
    Let $v=z_{i-1}$ and write $H=G-v$. 
    Let $\gamma$ be a B-coloring of $H$ which is small. 
    We have that $xv$ conflicts with at most $\Delta-1$ edges incident to $x$, at most $\Delta_2-2$ edges incident to $y$ (excluding $xy$ which is already counted), and $z_{i-2}z_{i-1}$ if it exists. 
    The total number of edges in $H$ that conflict with $xv$ is at most $\Delta+\Delta_2-2$, which leaves at least 2 colors available for $xv$.
    The same bound holds for the number of edges in $H$ that conflict with $yv$.
 
    If $z_{i-2}v$ exists, then it conflicts with $xy$, $\set{xz_j,yz_j: j\in\set{i-3,i-2}}$ and $z_{i-3}z_{i-2}$ if it exists. 
    This is at most 6 edges, which leaves at least 32 colors available for $z_{i-2}v$.

    Consequently, we may finish the B-coloring greedily by first coloring $xv$ and $yv$ and then coloring $z_{i-2}v$.  

    This concludes Case 2c. \\

    Thus, in all cases we have a B-coloring of $G$ which is small.
\end{proof}

\section{Conclusions and future work}
\label{sec:conc}
\subsection{Improving Results on Planar Graphs}
Our theorem motivates two questions regarding B-coloring planar graphs:
\begin{enumerate}[(1)]
    \item What is the smallest positive constant $C$ such that for any planar graph $G$, $q_B(G)\leq\Delta+\max\set{\Delta_2,C}$?

    Our theorem gives $C\leq 38$. 
    The graph $G=K_{2,2,2}$ gives $\Delta(G)=\Delta_2(G)=4$ and $q_B(G)=12$, so $C\geq 8$ is necessary. 
    This motivates the second question: 
    
    \item Are there elementary graph parameters that can provide good lower and upper bounds on $q_B(G)$?

    It is clear that, for any planar graph $G$, $q_B(G)\geq\max\set{\Delta,2\Delta_2}$. Is it true that there is a constant $C'$ such that for any planar graph $G$, $q_B(G)\leq\max\set{\Delta,2\Delta_2,C'}$?
\end{enumerate}

\subsection{Outerplanar Graphs}
Gy\'arf\'as, Martin, Ruszink\'o, and S\'ark\"ozy~\cite{GMRS_2024} pose a similar conjecture for outerplanar graphs $G$: is $q_B(G)=\Delta$ when $\Delta$ is sufficiently large? In other words, is there a constant $C$ such that $q_B(G)\leq\max\set{\Delta,C}$ for all outerplanar $G$? 
This was proven true by Kong, Wang, and Zheng~\cite{KWZ_2026}, who showed that $q_B(G)=\Delta$ when $\Delta\geq 7$.
Martin, Ruszink\'o, and S\'ark\"ozy~\cite{MRS_2026} proved that for any outerplanar graph $G$, $q_B(G)\leq\max\set{\Delta,6}$
The graph $G_1=K_{1,1,2}$ is outerplanar with $\Delta(G_1)=3$ and $q_B(G_1)=5$, and the graph $G_2=K_5-P_4$ is outerplanar with $\Delta(G_2)=4$ and $q_B(G_2)=5$. 
Hence, $C\geq 5$. 
It remains to show whether $C=5$ or $C=6$.

\subsection{General Graphs}
We also ask the question of what parameters can be used to provide bounds on $q_B(G)$ for general graphs $G$. 
Note that in a complete multipartite graph, every pair of edges is either incident or in the same $C_4$. 
Hence, an immediate lower bound for $q_B(G)$ is the maximum number of edges in a complete multipartite subgraph. 
We will denote this parameter for graph $G$ by $t(G)$ (for ``Tur\'an graph''). 
It is not clear what an upper bound or better lower bound would be in terms of $t(G)$. 
Gy\'arf\'as and S\'ark\"ozy~\cite{GS_2023} observed that if $G$ is a simple grpah with maximum degree $\Delta$, then $q_B(G)\leq \Delta^2,$
and this immediately implies $q_B(G)\leq (t(G))^2$ since a degree-$\Delta$ vertex is a complete multipartite graph.

For a better lower bound, consider the graph $C_5(r)$, which is isomorphic to the balanced blow-up of $C_5$ with parts of size $r$. 
This gives $t\bigl(C_5(r)\bigr)=2r^2$ by taking the blowup on one vertex in $C_5$ as a part of size $r$ and the blowups of its neighbors as a part of size $2r$.
If any three edges in $C_5(r)$ share a color, then two of those edges are in blowups of either the same edge or incident original edges in $C_5$. 
Hence, those two edges are either incident or in the same $C_4$, a contradiction. 
As a result, any color class in $C_5(r)$ has at most two edges.
This gives
$$q_B(C_5(r))\geq \ceil{\frac{5}{2}r^2}=\ceil{\frac{5}{4}t\bigl(C_5(r)\bigr)}.$$
This bound is tight; we may choose color sets $\gamma_i,i\in\set{1,2,3,4,5}$ with size $\ceil{r^2/2}$ if $i$ is odd or $\floor{r^2/2}$ if $i$ is even. The blowups of the edges can be colored using the two sets $\gamma_1\gamma_2,\gamma_3\gamma_4,\gamma_1\gamma_5,\gamma_2\gamma_3,\gamma_4\gamma_5$ in cyclic order.
Therefore we cannot obtain a Vizing-type theorem in which $t(G)$ is both a lower bound and within a constant of an upper bound. 
However, that does not preclude another elementary parameter from providing a Vizing-type theorem for B-colorings of general simple graphs. 

\section*{Acknowledgements}
The author would like to thank Ryan Martin for mentoring and sponsoring this research. 
The author would also like to thank Yuping Gao for alerting him to the publication of \cite{KWZ_2026}, giving inspiration for this paper.

The example of $K_{1,1,\Delta-1}$ which established that $q_B(G)>\max\set{\chi'(G),2\Delta_2(G)}$ was found with the assistance of M365 Copilot based on the GPT-5 chat model, accessed 28 July 2026. 

\bibliographystyle{plain}
\bibliography{biblio}

@incollection {FSGT_1990,
    AUTHOR = {Faudree, R. J. and Schelp, R. H. and Gy\'arf\'as, A. and Tuza,
              Zs.},
     TITLE = {The strong chromatic index of graphs},
      NOTE = {Twelfth British Combinatorial Conference (Norwich, 1989)},
   JOURNAL = {Ars Combin.},
  FJOURNAL = {Ars Combinatoria. A Canadian Journal of Combinatorics},
    VOLUME = {29},
      YEAR = {1990},
     PAGES = {205--211},
      ISSN = {0381-7032,2817-5204},
   MRCLASS = {05C15},
  MRNUMBER = {1412876},
}

@article{KWZ_2026,
    AUTHOR = {Kong, Jiangxu and Wang, Yiqiao and Zheng, Mengmeng},
     TITLE = {B-Coloring of Planar Graphs},
   JOURNAL = {Journal of Graph Theory},
      NOTE = {Published online, 12pp.},
       DOI = {https://doi.org/10.1002/jgt.70067},
       URL = {https://onlinelibrary.wiley.com/doi/abs/10.1002/jgt.70067},
    EPRINT = {https://onlinelibrary.wiley.com/doi/pdf/10.1002/jgt.70067},
      YEAR = {2026}
}

@techreport{BBGH_2002,
    AUTHOR = {Borodin, O. V. and Broersma, H. J. and Glebov, A. N. and {van den Heuvel}, J.},
     TITLE = {Stars and bunches in planar graphs. Part {II}: General planar graphs and colourings},
INSTITUTION = {London School of Economics},
      TYPE = {CDAM Research Report},
    NUMBER = {CDAM-2002-05},
      YEAR = {2002}
}

@article {Vizing_1964,
    AUTHOR = {Vizing, V. G.},
     TITLE = {On an estimate of the chromatic class of a {$p$}-graph},
   JOURNAL = {Diskret. Analiz},
  FJOURNAL = {Akademiya Nauk SSSR. Sibirskoe Otdelenie. Institut Matematiki.
              Diskretny\u i\ Analiz. Sbornik Trudov},
      YEAR = {1964},
    NUMBER = {3},
     PAGES = {25--30},
   MRCLASS = {05.55},
  MRNUMBER = {180505},
MRREVIEWER = {J.\ Bos\'ak},
}

@article{Vizing_1965,   
    AUTHOR = {Vizing, V. G.},
     TITLE = {Critical graphs with given chromatic class},
   JOURNAL = {Diskret. Analiz},
  FJOURNAL = {Akademiya Nauk SSSR. Sibirskoe Otdelenie. Institut Matematiki.
              Diskretny\u i\ Analiz. Sbornik Trudov},
      YEAR = {1965},
    NUMBER = {5},
     PAGES = {9--17},
   MRCLASS = {05.55},
  MRNUMBER = {200202},
MRREVIEWER = {J.\ Bos\'ak},
}

@article{SZ_2001,
    AUTHOR = {Sanders, Daniel P. and Zhao, Yue},
     TITLE = {Planar graphs of maximum degree seven are class {I}},
   JOURNAL = {J. Combin. Theory Ser. B},
  FJOURNAL = {Journal of Combinatorial Theory. Series B},
    VOLUME = {83},
      YEAR = {2001},
    NUMBER = {2},
     PAGES = {201--212},
      ISSN = {0095-8956,1096-0902},
   MRCLASS = {05C15 (05C35)},
  MRNUMBER = {1866396},
MRREVIEWER = {H.\ L.\ Abbott},
       DOI = {10.1006/jctb.2001.2047},
       URL = {https://doi.org/10.1006/jctb.2001.2047},
}

@article{MS_2017,
    AUTHOR = {Song, Wen-Yao and Miao, Lian-Ying},
     TITLE = {Strong edge-coloring of planar graphs},
   JOURNAL = {Discuss. Math. Graph Theory},
  FJOURNAL = {Discussiones Mathematicae. Graph Theory},
    VOLUME = {37},
      YEAR = {2017},
    NUMBER = {4},
     PAGES = {845--857},
      ISSN = {1234-3099,2083-5892},
   MRCLASS = {05C15 (05C10)},
  MRNUMBER = {3684111},
MRREVIEWER = {Deming\ Li},
       DOI = {10.7151/dmgt.1951},
       URL = {https://doi.org/10.7151/dmgt.1951},
}

@article{BI_strong_2013,
    AUTHOR = {Borodin, Oleg V. and Ivanova, Anna O.},
     TITLE = {Precise upper bound for the strong edge chromatic number of
              sparse planar graphs},
   JOURNAL = {Discuss. Math. Graph Theory},
  FJOURNAL = {Discussiones Mathematicae. Graph Theory},
    VOLUME = {33},
      YEAR = {2013},
    NUMBER = {4},
     PAGES = {759--770},
      ISSN = {1234-3099,2083-5892},
   MRCLASS = {05C15 (05C10)},
  MRNUMBER = {3117054},
MRREVIEWER = {Erika\ Feckov\'a{} \v Skrabu\v l\'akov\'a},
       DOI = {10.7151/dmgt.1708},
       URL = {https://doi.org/10.7151/dmgt.1708},
}

@article{Deniz_2024,
    AUTHOR = {Deniz, Zakir},
     TITLE = {On 2-distance 16-coloring of planar graphs with maximum degree
              at most five},
   JOURNAL = {Discrete Math.},
  FJOURNAL = {Discrete Mathematics},
    VOLUME = {348},
      YEAR = {2025},
    NUMBER = {4},
     PAGES = {Paper No. 114379, 13},
      ISSN = {0012-365X,1872-681X},
   MRCLASS = {05C15 (05C10 05C12)},
  MRNUMBER = {4844744},
MRREVIEWER = {Daniele\ Parisse},
       DOI = {10.1016/j.disc.2024.114379},
       URL = {https://doi.org/10.1016/j.disc.2024.114379},
}

@article{Wang_2025,
    AUTHOR = {Wang, Runze},
     TITLE = {Strong edge-coloring of graphs with maximum edge weight seven},
   JOURNAL = {J. Comb. Optim.},
  FJOURNAL = {Journal of Combinatorial Optimization},
    VOLUME = {51},
      YEAR = {2026},
    NUMBER = {1},
     PAGES = {Paper No. 2, 11},
      ISSN = {1382-6905,1573-2886},
   MRCLASS = {05C15},
  MRNUMBER = {5004997},
       DOI = {10.1007/s10878-025-01381-5},
       URL = {https://doi.org/10.1007/s10878-025-01381-5},
}

@article{GS_2023,
    AUTHOR = {Gy\'arf\'as, Andr\'as and S\'ark\"ozy, G\'abor N.},
     TITLE = {``{L}ess'' strong chromatic indices and the {$(7,
              4)$}-conjecture},
   JOURNAL = {Studia Sci. Math. Hungar.},
  FJOURNAL = {Studia Scientiarum Mathematicarum Hungarica. Combinatorics,
              Geometry and Topology (CoGeTo)},
    VOLUME = {60},
      YEAR = {2023},
    NUMBER = {2-3},
     PAGES = {109--122},
      ISSN = {0081-6906,1588-2896},
   MRCLASS = {05C15},
  MRNUMBER = {4705277},
MRREVIEWER = {Hui\ Lei},
       DOI = {10.1556/012.2023.01539},
       URL = {https://doi.org/10.1556/012.2023.01539},
}

@incollection{BES_1973,
    AUTHOR = {Brown, W. G. and Erd{\H o}s, P. and S\'os, V. T.},
     TITLE = {Some extremal problems on {$r$}-graphs},
 BOOKTITLE = {New directions in the theory of graphs ({P}roc. {T}hird {A}nn
              {A}rbor {C}onf., {U}niv. {M}ichigan, {A}nn {A}rbor, {M}ich.,
              1971)},
     PAGES = {53--63},
 PUBLISHER = {Academic Press, New York-London},
      YEAR = {1973},
   MRCLASS = {05C35},
  MRNUMBER = {351888},
MRREVIEWER = {B\'ela\ Bollob\'as},
}

@article{GMRS_2024,
     TITLE = {Proper edge colorings of planar graphs with rainbow C4 ${C}_{4}$‐s},
    VOLUME = {107},
      ISSN = {1097-0118},
       URL = {http://dx.doi.org/10.1002/jgt.23163},
       DOI = {10.1002/jgt.23163},
    NUMBER = {4},
   JOURNAL = {Journal of Graph Theory},
 PUBLISHER = {Wiley},
    AUTHOR = {Gyárfás, András and Martin, Ryan R. and Ruszinkó, Miklós and Sárközy, Gábor N.},
      YEAR = {2024},
     MONTH = Aug, 
     PAGES = {833–846} 
}

@unpublished{MRS_2026,
     TITLE = {Bounds for B-coloring planar and outerplanar graphs},
    AUTHOR = {Martin, Ryan R. and Ruszinkó, Miklós and Sárközy, Gábor N.},
      NOTE = {Preprint}
}

@article{MR_1997,
    AUTHOR = {Molloy, Michael and Reed, Bruce},
     TITLE = {A bound on the strong chromatic index of a graph},
   JOURNAL = {J. Combin. Theory Ser. B},
  FJOURNAL = {Journal of Combinatorial Theory. Series B},
    VOLUME = {69},
      YEAR = {1997},
    NUMBER = {2},
     PAGES = {103--109},
      ISSN = {0095-8956,1096-0902},
   MRCLASS = {05C15},
  MRNUMBER = {1438613},
MRREVIEWER = {Mirko\ Hor\v n\'ak},
       DOI = {10.1006/jctb.1997.1724},
       URL = {https://doi.org/10.1006/jctb.1997.1724},
}

\end{document}